\title{\vspace{-0.5cm}A characterization
of finite vector bundles on Gauduchon astheno-K\"ahler manifolds}
\author{\vspace{0cm} Indranil Biswas and Vamsi Pritham Pingali}
\institution{School of Mathematics,
Tata Institute of fundamental research, Homi Bhabha road, Mumbai 400005, India}\\
\email{indranil@math.tifr.res.in}}\\
\institution{Department of Mathematics, Indian Institute of Science, Bangalore 560012, India}\\
\email{vamsipingali@iisc.ac.in}}
\date{\vspace{-5ex}} 
\journal{\'Epijournal de G\'eom\'etrie Alg\'ebrique} 
\numberwithin{equation}{numsection}
\newtheorem{theorem}{Theorem}[section]
\newtheorem{lemma}[theorem]{Lemma}
\newtheorem{remark}[theorem]{Remark}
\newcommand{\pbp}{\mathcal{\partial \overline{\partial}}}
\begin{document}


\maketitle



\begin{prelims}


\def\abstractname{Abstract}
\abstract{A vector bundle on a projective variety $X$ is called finite if it satisfies a 
nontrivial polynomial equation with integral coefficients. Nori proved that a vector 
bundle $E$ on $X$ is finite if and only if there is a finite \'etale Galois covering $q\, :\,
\widetilde{X}\, \longrightarrow\, X$ and a $\text{Gal}(q)$--module
$V$, such that $E$ is isomorphic to the quotient of $\widetilde{X}\times V$ by the twisted
diagonal action of $\text{Gal}(q)$ \cite{No1}, \cite{No2}. Therefore, $E$ is
finite if and only if the pullback of $E$ to some finite \'etale 
Galois covering of $X$ is trivial. We prove the same statement when
$X$ is a compact complex manifold admitting a Gauduchon astheno-K\"ahler metric.}

\keywords{Finite bundles; astheno-K\"ahler manifolds; numerically flat bundles; Uhlenbeck-Yau theorem}

\MSCclass{32L10; 53C55; 14D21}

\vspace{0.15cm}

\languagesection{Fran\c{c}ais}{%

\textbf{Titre. Une caract\'erisation des fibr\'es vectoriels finis sur les vari\'et\'es asth\'eno-k\"ahl\'eriennes Gauduchon.} Un fibr\'e vectoriel sur une vari\'et\'e projective $X$ est dit fini s'il satisfait une \'equation polynomiale non triviale \`a coefficients entiers. Nori a montr\'e qu'un fibr\'e vectoriel $E$ sur $X$ est fini si et seulement s'il existe un rev\^etement galoisien \'etale fini $q\, :\,
\widetilde{X}\, \longrightarrow\, X$ et un $\text{Gal}(q)$--module
$V$ tels que $E$ soit isomorphe au quotient de $\widetilde{X}\times V$ par l'action diagonale tordue de $\text{Gal}(q)$ \cite{No1}, \cite{No2}. Par cons\'equent, $E$ est
fini si et seulement si l'image r\'eciproque de $E$ sur un certain rev\^etement galoisien fini de $X$ est triviale. Nous prouvons le m\^eme \'enonc\'e lorsque 
$X$ est une vari\'et\'e complexe compacte admettant une m\'etrique de Gauduchon asth\'eno-k\"ahl\'erienne .}

\end{prelims}


\newpage

\setcounter{tocdepth}{1} \tableofcontents

\section{Introduction}\label{Introsec}

For a polynomial $f(x)\,=\, \displaystyle \sum_{i=0}^n a_ix^i$, where $a_i$ are nonnegative integers,
and a holomorphic vector bundle $E$ on a complex projective variety or more
generally on a compact complex manifold $X$, the vector bundle
$$
\bigoplus_{i=0}^n (E^{\otimes i})^{\oplus a_i}
$$
on $X$ is denoted by $f(E)$ (the vector bundle $E^0$ is defined to be the trivial line 
bundle ${\mathcal O}_X$). An algebraic (respectively, holomorphic) vector bundle $E$ on a projective
variety (respectively, compact complex manifold) $X$ is called 
\textit{finite} if there are two distinct polynomials $f$, $g$ with nonnegative 
integral coefficients, such that the vector bundle $f(E)$ is isomorphic to $g(E)$. It
is known that a vector bundle $E$ is finite if and only if
there is a finite collection of
vector bundles $\{F_j\}_{j=1}^m$ and nonnegative integers $\{a_{i,j}\}_{j=1}^m$
such that
$$E^{\otimes i}\,=\, \bigoplus_{j=1}^m (F_j)^{\oplus a_{i,j}}$$
for all $i\, \geq\, 1$ \cite[p.~35, Lemma~3.1]{No1}, \cite[p.~80, Lemma~3.1]{No2}. When $X$ 
is a compact Riemann surface, it was noted by Weil that $E$ is finite if it
admits a flat connection with finite monodromy that is compatible with 
holomorphic structure of $E$ \cite{We}. Note that the condition that $E$ admits a flat 
connection with finite monodromy is equivalent to the condition that the pullback of 
$E$ to some finite \'etale Galois covering of $X$ is holomorphically trivial.

In a foundational work, Nori proved that a vector bundle $E$ on a complex projective variety $X$ is finite if and only if
there is a finite \'etale Galois covering $q\, :\, \widetilde{X}\, \longrightarrow\, X$ and a $\text{Gal}(q)$--module
$V$, such that $E$ is isomorphic to the quotient of $\widetilde{X}\times V$ by the twisted diagonal action of $\text{Gal}(q)$
\cite{No1}, \cite{No2}. Therefore, the monodromy homomorphism of the flat connection on $E$, given by the trivial connection on
$\widetilde{X}\times V\,\longrightarrow \,\widetilde{X}$, coincides with the one given by the action on $V$ of
$\text{Gal}(q)$ (it is a quotient of the fundamental group of $X$).

Let $X$ be a compact complex manifold equipped with a Hermitian form $\omega$.
This pair $(X,\omega)$ is called Gauduchon if $\pbp \omega^{n-1}\,=\,0$, and
it is called astheno-K\"ahler if $\pbp \omega^{n-2}\,=\,0$ \cite{JY}. Any compact complex
manifold admits a Gauduchon metric \cite{Ga}, and any complex Hermitian surface is astheno-K\"ahler. 

Our aim here is to prove the following:

\begin{theorem}
Suppose $X$ is a compact complex manifold admitting a Hermitian metric
that is both Gauduchon and astheno-K\"ahler. Then a holomorphic vector
bundle $E$ over $X$ is finite if 
and only if it corresponds to a representation of a finite quotient of the 
fundamental group of $X$, or equivalently, if and only if $E$ admits a flat 
connection, compatible with its holomorphic structure, that has finite monodromy group.
\label{mainthm}
\end{theorem}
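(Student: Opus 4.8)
The plan is to treat the two implications separately; the reverse direction is essentially formal, and all of the analytic content lies in showing that a finite bundle is flat with finite monodromy. For the reverse direction, suppose $E$ corresponds to a representation $\rho$ of a finite quotient $Q$ of $\pi_1(X)$. Then the pullback of $E$ to the associated finite \'etale Galois cover $\widetilde{X}\to X$ is holomorphically trivial, and since the finite group $Q$ has only finitely many isomorphism classes of irreducible representations, each $V^{\otimes i}$ (and hence each $E^{\otimes i}$, realized as an associated bundle) decomposes into a fixed finite family of indecomposable pieces. That is exactly the finiteness criterion recorded in the introduction, so this direction reduces to a routine bookkeeping check.

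For the forward direction I would fix the Gauduchon astheno-K\"ahler form $\omega$ and work with the Gauduchon degree $\deg(E)=\int_X c_1(E)\wedge\omega^{n-1}$, the condition $\pbp\,\omega^{n-1}=0$ being precisely what makes this degree and the attendant slope-stability theory well posed. First I would show that a finite $E$ is semistable with $\deg(E)=0$. Since $\mu(A\otimes B)=\mu(A)+\mu(B)$, one has $\mu(E^{\otimes i})=i\,\mu(E)$, whereas the slopes of the fixed finite family $\{F_j\}$ out of which every $E^{\otimes i}$ is assembled are bounded; letting $i\to\infty$ forces $\mu(E)=0$. Applying the same boundedness to $\mu_{\max}$ and $\mu_{\min}$, together with the fact that tensor products of semistable bundles remain semistable under the Hermitian--Einstein correspondence for Gauduchon metrics (Li--Yau's extension of Donaldson--Uhlenbeck--Yau), then yields semistability.

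Next I would extract the vanishing of the relevant Chern numbers. Because $E$ is finite, $\ch(E)^{i}=\ch(E^{\otimes i})=\sum_j a_{i,j}\,\ch(F_j)$ as cohomology classes, and here the astheno-K\"ahler condition $\pbp\,\omega^{n-2}=0$ is what makes the pairing $\gamma\mapsto\int_X\gamma\wedge\omega^{n-2}$ independent of Chern--Weil representatives (two representatives differ by a $\pbp$-exact form, which integrates to zero against $\omega^{n-2}$). Pairing the degree-four part of $\ch(E)^{i}$ against $\omega^{n-2}$ gives $i\,r^{i-1}\!\int_X\ch_2(E)\wedge\omega^{n-2}+\binom{i}{2}r^{i-2}\!\int_X c_1(E)^2\wedge\omega^{n-2}$, while the right-hand side $\sum_j a_{i,j}\!\int_X\ch_2(F_j)\wedge\omega^{n-2}$ is $O(r^{i})$ since $\sum_j a_{i,j}\,\mathrm{rank}(F_j)=r^{i}$. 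Comparing growth rates in $i$ forces first $\int_X c_1(E)^2\wedge\omega^{n-2}=0$ (the quadratic term) and then $\int_X\ch_2(E)\wedge\omega^{n-2}=0$ (the linear term).

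The hard part will be converting this numerical vanishing into genuine flatness, and this is where both special properties of $\omega$ are indispensable. By the Li--Yau theorem the polystable degree-zero bundle $E$ admits a Hermitian--Einstein metric whose Chern connection satisfies $\sqrt{-1}\,\Lambda F=0$. The key step is a Chern--Weil identity, valid precisely because $\pbp\,\omega^{n-2}=0$, expressing $\int_X\ch_2(E)\wedge\omega^{n-2}$ and $\int_X c_1(E)^2\wedge\omega^{n-2}$ through $\int_X\big|F-\tfrac1r(\Tr F)\,\mathrm{Id}\big|^2\,\omega^n$ and $\int_X|\Tr F|^2\,\omega^n$; feeding in $\Lambda F=0$ together with the two vanishings forces first the trace-free part and then the trace part of $F$ to vanish, so $F\equiv 0$ and the Chern connection is flat and unitary. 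Getting the signs and the trace/trace-free splitting exactly right (a Bogomolov--L\"ubke-type argument adapted to the astheno-K\"ahler setting) is the main obstacle. Finally, the flat unitary connection presents $E$ via a representation $\rho\colon\pi_1(X)\to U(r)$; finiteness of $E$ means the irreducible flat unitary constituents of all $\rho^{\otimes i}$ form a finite set, which forces $\rho(\pi_1(X))$ to be finite (the Tannakian subcategory generated by $\rho$ is finite), identifying $E$ with a representation of a finite quotient of $\pi_1(X)$.
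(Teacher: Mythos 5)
Your reverse direction and your growth-rate arguments (slope $0$, semistability via $\mu_{\max}(E^{\otimes i})\geq i\,\mu_{\max}(E)$ bounded, and the vanishing of $\int_X c_1(E)^2\wedge\omega^{n-2}$ and $\int_X \ch_2(E)\wedge\omega^{n-2}$ by comparing degrees of growth in $i$) are sound and close in spirit to the paper's Theorem \ref{vanishingofchernclassesthm}. The genuine gap is the sentence ``By the Li--Yau theorem the polystable degree-zero bundle $E$ admits a Hermitian--Einstein metric.'' You have only established \emph{semistability}, and semistable does not imply polystable; the Li--Yau/L\"ubke--Teleman correspondence produces a Hermite--Einstein metric only on stable (hence polystable) bundles. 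Asserting polystability of a finite bundle at this stage is circular: it is essentially equivalent to the conclusion you are trying to reach. A priori a finite semistable bundle of degree zero is only an iterated extension of stable flat pieces, and the entire difficulty of the theorem is to rule out nontrivial extensions. Your Kobayashi--L\"ubke-type identity (which is exactly the tool the paper borrows from \cite{LYZ}) legitimately applies only to the stable Jordan--H\"older quotients, not to $E$ itself.

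The paper bridges this gap with machinery your proposal has no substitute for: it proves $E$ is numerically flat (Lemma \ref{nefness}, via the Demailly--Peternell--Schneider criterion), extracts a Jordan--H\"older filtration by genuine \emph{subbundles} whose stable graded pieces satisfy the Kobayashi--L\"ubke equality and are therefore Hermitian flat (Theorem \ref{DPSthm}); shows each graded piece is itself finite using Atiyah's Krull--Schmidt theorem and the uniqueness of the Jordan--H\"older graded object (Lemma \ref{finiteHN}), so that each has finite monodromy and trivializes on a finite \'etale cover (Lemma \ref{finiteflat}); and finally proves the key Lemma \ref{extensionsbyflats}: a \emph{finite} bundle that is an iterated extension of trivial line bundles is itself trivial. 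That last lemma is where finiteness is used beyond Chern-class vanishing --- a nontrivial extension of $\mathcal{O}_Y$ by $\mathcal{O}_Y$ forces $\dim H^0(Y,\mathrm{Sym}^n(E))=1$ for all $n$, producing indecomposable summands of $E^{\otimes n}$ of unbounded rank, contradicting finiteness. Without an argument of this kind your proof cannot conclude that the flat structure on the graded pieces assembles into a flat structure on $E$. (Your closing Tannakian remark, that a flat unitary bundle which is finite has finite monodromy, is fine and is the paper's appeal to \cite[Prop.\ 2.8]{scholbis}.)
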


Note that a flat connection with finite monodromy is a unitary connection for a suitable
Hermitian structure.

Theorem \ref{mainthm} was proved earlier in \cite{scholbis} under the assumption that 
$X$ is a compact K\"ahler manifold. The proof in \cite{scholbis} crucially used a 
well-known theorem of Corlette--Simpson \cite{Si}. However, that theorem is available 
only in the K\"ahler context. Here we have been able to avoid using the 
Corlette--Simpson theorem.

For higher dimensional examples of Gauduchon astheno-K\"ahler manifolds,
see \cite{LY}, \cite{FGV}, \cite{LaUg}, \cite{LYZ} for instance.

\section{Vanishing of Chern classes}\label{vanishing}

In whatever follows, for a compact complex manifold $X$ we use the Bott-Chern cohomology
defined as $$H^{p,q}_{BC} 
(X)\,=\,\frac{ker(\partial : A^{p,q}(X) \rightarrow A^{p+1,q}(X))\cap ker(\bar{\partial} : A^{p,q}(X) \rightarrow A^{p,q+1}(X))}{Im(\pbp(A^{p-1,q-1}(X)))}\, $$
where $A^{p,q}(X)$ consists of smooth $(p,q)$-forms on $X$. These groups are finite 
dimensional. They coincide with the Dolbeault cohomology
groups for K\"ahler manifolds, as a consequence of the $\pbp$ lemma. If $h$ is a 
Hermitian metric on a holomorphic vector bundle $E$ over $X$, and $\Theta_h$ is the
curvature of its 
Chern connection, then for any characteristic class $\Phi$, its Chern-Weil form 
$\Phi(\Theta_h)$ is closed, and $\Phi(\Theta_{h_2})-\Phi(\Theta_{h_1}) \,=\, \pbp 
BC(h_1,h_2)$, where $BC$ is the Bott-Chern form. Therefore, we can think of $\Phi$ as 
an element of the Bott-Chern cohomology (with representatives furnished by using 
Chern connections). Moreover, if $E\,=\,S\bigoplus V$, then 
$[ch(E)]_{BC}\,=\,[ch(S)]_{BC}+[ch(V)]_{BC}$ (proven by taking a direct sum metric). 
Likewise, we have $[ch(E\otimes F)]_{BC}\,=\,[ch(E)]_{BC}[ch(F)]_{BC}$.

{}From now onwards, we denote a vector bundle $\bigoplus_{j=1}^m (F_j)^{\oplus a_{ij}}$ 
as $\displaystyle \sum_{j=1}^m a_{ij}F_j$.

\begin{theorem}
Suppose $E$ is a finite holomorphic vector bundle of rank $r$ on a compact complex manifold
$X$, i.e., there exists integers $m\geq 1$, $a_{i,j} \,\geq\, 0$, $r_j \,\geq\, 1$ with
$1\leq\, j\, \leq\, m$, $i\,\in\, {\mathbb N}^{>0}$,
and holomorphic vector bundles $V_j$ of ranks $r_j$ such that $E^{\otimes i}
\,=\,\displaystyle \sum_{j=1}^{m} a_{i,j} V_j$. Then $[\mathrm{ch}(E)]_{BC}\,=\,r$. 
\label{vanishingofchernclassesthm}
\end{theorem}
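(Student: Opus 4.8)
The plan is to transport the tensor-power identity into the Bott-Chern cohomology ring and then play the polynomial-in-$i$ growth of one side against the at-most-geometric growth of the multiplicities on the other. If $r=0$ then $E$ is the zero bundle and there is nothing to prove, so I assume $r\geq 1$.

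First I would set $c\,:=\,[\mathrm{ch}(E)]_{BC}$ and $w_j\,:=\,[\mathrm{ch}(V_j)]_{BC}$, regarded as elements of the finite-dimensional graded ring $\bigoplus_p H^{p,p}_{BC}(X)$. Applying the multiplicativity $[\mathrm{ch}(E\otimes F)]_{BC}\,=\,[\mathrm{ch}(E)]_{BC}[\mathrm{ch}(F)]_{BC}$ and the additivity $[\mathrm{ch}(S\oplus V)]_{BC}\,=\,[\mathrm{ch}(S)]_{BC}+[\mathrm{ch}(V)]_{BC}$ recorded above to the hypothesis $E^{\otimes i}\,=\,\sum_{j} a_{i,j}V_j$, I obtain the family of identities $c^i\,=\,\sum_{j=1}^m a_{i,j}w_j$, valid for every $i\geq 1$. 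Comparing degree-zero parts (that is, ranks) gives $\mathrm{ch}_0(E)\,=\,r$ together with the bound $\sum_j a_{i,j}\,\leq\,\sum_j a_{i,j}r_j\,=\,r^i$, where I used $r_j\geq 1$. Thus the multiplicities, though unbounded, grow at most like $r^i$.

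It then remains to kill the positive-degree part of $c$. Suppose not, and let $d\geq 1$ be the least integer with $\mathrm{ch}_d(E)\neq 0$ in $H^{d,d}_{BC}(X)$. Writing $c\,=\,r+\mathrm{ch}_d(E)+c'$, where $c'$ collects the components of degree strictly greater than $d$, I expand $c^i$ and observe that, since every intermediate degree between $1$ and $d-1$ vanishes by minimality of $d$, the only way to assemble total degree $d$ from a product of $i$ factors is to select the degree-$d$ term from exactly one factor and the scalar $r$ from the remaining $i-1$ factors. Hence the degree-$d$ component of $c^i$ equals $i\,r^{i-1}\mathrm{ch}_d(E)$, and extracting the degree-$d$ component of $c^i=\sum_j a_{i,j}w_j$ yields $i\,r^{i-1}\mathrm{ch}_d(E)\,=\,\sum_{j=1}^m a_{i,j}\,\mathrm{ch}_d(V_j)$.

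The crux is the resulting growth estimate inside the finite-dimensional space $H^{d,d}_{BC}(X)$ (here I use that the Bott-Chern groups are finite dimensional, so that a norm $\|\cdot\|$ is available). Setting $C\,:=\,\max_j\|\mathrm{ch}_d(V_j)\|$, the right-hand side has norm at most $C\sum_j a_{i,j}\,\leq\,C\,r^i$, whereas the left-hand side has norm $i\,r^{i-1}\|\mathrm{ch}_d(E)\|$. Dividing by $r^{i-1}$ forces $i\,\|\mathrm{ch}_d(E)\|\,\leq\,C\,r$ for all $i\geq 1$, which is impossible once $\mathrm{ch}_d(E)\neq 0$. This contradiction gives $\mathrm{ch}_d(E)\,=\,0$ for every $d\geq 1$, and therefore $[\mathrm{ch}(E)]_{BC}\,=\,r$. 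I expect the only delicate point to be the combinatorial bookkeeping that isolates the factor $i\,r^{i-1}$ in the degree-$d$ part; once that is in hand, the finite-dimensionality of the Bott-Chern groups and the rank identity controlling the $a_{i,j}$ convert the estimate directly into the desired vanishing.
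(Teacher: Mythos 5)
Your argument is correct and is essentially the paper's own proof: both isolate the lowest nonvanishing degree of the Chern character (the paper via induction on the degree, you via a minimal counterexample $d$), both use the identity that the degree-$d$ part of $c^i$ equals $i\,r^{i-1}\mathrm{ch}_d(E)$ once lower degrees vanish, and both derive a contradiction by comparing this linear-times-geometric growth against the bound $\sum_j a_{i,j}r_j = r^i$ on the right-hand side (the paper phrases the latter as lying in the convex hull of the classes $[\mathrm{ch}_d(V_j)/r_j]_{BC}$, you as a norm estimate in the finite-dimensional space $H^{d,d}_{BC}(X)$). The differences are purely cosmetic.
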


\begin{proof}
We induct on $i$ to show that $[ch_i(E)]_{BC}\,=\,0$ for all $i\, \geq\, 1$.

For $i\,=\,1$, note that $\frac{n}{r} [c_1(E)]_{BC}\,= \,\frac{[ch_1 (E^{\otimes n})]_{BC}}{r^n}$. On
the other hand, $E^{\otimes n} \,=\, \bigoplus_{j=1}^m a_{n,j} V_j$.
So, $$\frac{[ch_1(E^{\otimes n})]_{BC}}{r^n}
\,=\, \displaystyle\sum_{j=1}^m a_{n,j} \frac{[c_1(V_j)]_{BC}}{r^n}\,=\,
\displaystyle\sum_{j=1}^m r_j a_{n,j} \frac{[c_1(V_j)]_{BC}/r_j}{r^n}
\, .$$
Since $r^n \,=\, \displaystyle \sum_{j=1}^k r_ja_{n,j}$, it follows that
$\displaystyle \sum_{j=1}^m a_{n,j}\frac{[c_1(V_j)]_{BC}}{r^n}$ lies in a bounded subset of
$H^{1,1}_{BC}(X, \mathbb{C})$; indeed it lies in the convex subset generated by
$\{[c_1(V_j)/r_j]_{BC}\}_{j=1}^k$. However, $\frac{n}{r} [c_1(E)]_{BC}$ is unbounded as $n$
varies if $[c_1(E)]_{BC}$ is nonzero. Therefore, it follows that $[c_1(E)]_{BC} \,=\, 0$.

Suppose $[ch_i(E)]_{BC}\,=\,0$ for all $1\,\leq\, i\,\leq\, k-1$. Then,
$$\frac{[ch_k(E^{\otimes n})]_{BC}}{r^n}\,=\, \frac{n}{r} [ch_k(E)]_{BC}\, ,$$ and hence
$\frac{[ch_k(E^{\otimes n})]_{BC}}{r^n}$ is unbounded as $n$ varies if $[ch_k(E)]_{BC}$ is nonzero.
On the other hand, we have
$$
\displaystyle\sum_{j=1}^m a_{n,j} \frac{[ch_k(V_j)]_{BC}}{r^n}\,=\,
\displaystyle\sum_{j=1}^m r_j a_{n,j} \frac{[ch_k(V_j)]_{BC}/r_j}{r^n}\, ,
$$
so $\displaystyle\sum_{j=1}^m a_{n,j} \frac{[ch_k(V_j)]_{BC}}{r^n}$ lies in the convex subset
of $H^{1,1}_{BC}(X, \mathbb{C})$ generated by $\{[ch_k(V_j)/r_j]_{BC}\}_{j=1}^k$ and is hence bounded. Thus, as before, $[ch_k(E)]_{BC}=0$.
\hfill $\Box$
\end{proof}

Take a finite vector bundle $E$ and vector bundles $V_j$, $1\leq\, j\, \leq\, m$, as in Theorem 
\ref{vanishingofchernclassesthm}. We may assume that
\begin{enumerate}
\item[\rm (1)] each $V_j$ is indecomposable (if $V_j\,=\, \bigoplus_{k=1}^\ell W_k$ with
each $W_k$ indecomposable, then replace $V_j$ by $\{W_k\}_{k=1}^\ell$),

\item[\rm (2)] the isomorphism classes of $V_j$ are distinct, and

\item[\rm (3)] for each $1\leq\, j\, \leq\, m$, there is some $j'\, \geq\, 1$ such that
$a_{j',j}$ in Theorem \ref{vanishingofchernclassesthm} is nonzero (if $a_{i,j}\,=\, 0$ for all $i$
then discard $V_j$).
\end{enumerate}

\begin{lemma}\label{lem0}
Each of the above vector bundles $V_j$, $1\leq\, j\, \leq\, m$, is finite.
\end{lemma}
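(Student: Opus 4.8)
The plan is to show that the single finite family $\{V_1,\ldots,V_m\}$ already controls all tensor powers of each $V_j$. Recall the characterization of finiteness quoted in the introduction: a bundle $W$ is finite precisely when there is a finite family of bundles $\{F_\ell\}$ and nonnegative integers $b_{i,\ell}$ with $W^{\otimes i}\,=\,\sum_\ell b_{i,\ell}F_\ell$ for every $i\,\geq\,1$. Thus it suffices to prove that, for each fixed $j$, every tensor power $V_j^{\otimes i}$ decomposes as a direct sum of copies of $V_1,\ldots,V_m$.

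First I would invoke hypothesis (3): for the given $j$ there is an index $j'$ with $a_{j',j}\,>\,0$, so $V_j$ occurs as a direct summand of $E^{\otimes j'}$, say $E^{\otimes j'}\,=\,V_j\oplus R_j$. Tensoring $i$ copies of this decomposition and distributing the tensor product over the direct sum, one of the resulting summands is $V_j^{\otimes i}$; hence $V_j^{\otimes i}$ is a direct summand of $(E^{\otimes j'})^{\otimes i}\,=\,E^{\otimes (j'i)}$.

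Next I would use the defining property of $E$, namely $E^{\otimes (j'i)}\,=\,\sum_{\ell=1}^m a_{j'i,\ell}V_\ell$, which exhibits $E^{\otimes (j'i)}$ as a direct sum of the indecomposable bundles $V_\ell$. Since $V_j^{\otimes i}$ is a direct summand of $E^{\otimes (j'i)}$, each indecomposable summand of $V_j^{\otimes i}$ is also an indecomposable summand of $E^{\otimes (j'i)}$, and is therefore isomorphic to one of the $V_\ell$. Consequently $V_j^{\otimes i}\,=\,\sum_{\ell=1}^m b^{(j)}_{i,\ell}V_\ell$ with $b^{(j)}_{i,\ell}\,\geq\,0$, which is exactly the finiteness of $V_j$.

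The step that needs justification, and which I expect to be the only real point, is the assertion that the indecomposable constituents of a direct summand of $\bigoplus_\ell a_{j'i,\ell}V_\ell$ lie among the $V_\ell$. This is the uniqueness part of the Krull--Schmidt theorem, valid for holomorphic vector bundles on a compact complex manifold: the endomorphism ring of such a bundle is a finite-dimensional $\mathbb{C}$-algebra, an indecomposable bundle has local endomorphism ring, and hence a decomposition into indecomposables is unique up to isomorphism and reordering. Granting this, comparing the two decompositions of $E^{\otimes (j'i)}$ --- one through $V_j^{\otimes i}$ and one through the $V_\ell$ --- forces the indecomposable summands of $V_j^{\otimes i}$ into $\{V_1,\ldots,V_m\}$, completing the argument.
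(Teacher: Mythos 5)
Your proposal is correct and follows essentially the same route as the paper: both realize $V_j^{\otimes i}$ as a direct summand of $E^{\otimes ki}$ (using an exponent $k$ with $a_{k,j}\geq 1$) and then invoke Atiyah's Krull--Schmidt theorem for holomorphic vector bundles to conclude that its indecomposable constituents lie among $V_1,\ldots,V_m$. The only difference is that you spell out the justification of Krull--Schmidt via local endomorphism rings, whereas the paper simply cites \cite[p.~315, Theorem~3]{At}.
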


\begin{proof}
Take any $V_j$. Take any $k\, \geq\, 1$ such that $a_{k,j}\,\geq\, 1$. Then for every $n\, \geq\, 1$,
the vector bundle $V^{\otimes n}_j$ is a direct summand of
\begin{equation}\label{dsf}
E^{\otimes kn}\,=\, \displaystyle \sum_{\ell=1}^{m} a_{kn,\ell} V_\ell\, .
\end{equation}
If $W$ be a holomorphic vector bundle on $X$, and
$$
\displaystyle \sum_{\alpha=1}^{a} A_\alpha \,=\, W\,=\, \displaystyle \sum_{\alpha=1}^{b} B_\alpha\, ,
$$
where all $A_\alpha$ and $B_\alpha$ are indecomposable holomorphic vector bundles, then a
theorem of Atiyah says that
\begin{itemize}
\item $a\,=\, b$, and

\item there is a permutation $\sigma$ of $\{1,\, \cdots,\, a\}$ such that the
holomorphic vector bundle $A_\alpha$ is
holomorphically isomorphic to $B_{\sigma(\alpha)}$ for all $\alpha\, \in\, \{1,\, \cdots,\, a\}$.
\end{itemize}
(See \cite[p.~315, Theorem~3]{At}.) In view of this theorem, it follows from \eqref{dsf}
that the direct summand $V^{\otimes n}_j$ is a direct sum of
copies of $V_\ell$, $1\leq\, \ell\, \leq\, m$. Therefore, $V_i$ is finite.
\hfill $\Box$
\end{proof}

\section{A characterization of numerically flat bundles on Gauduchon astheno-K\"ahler 
manifolds}\label{DPSresult}

We recall the definition of a \emph{nef} vector bundle as in \cite{DPS}. A line bundle $L$ over 
a compact complex manifold $M$, equipped with a Hermitian metric $\omega$, is called nef if 
given any real number $\epsilon \,>\, 0$, there is a smooth Hermitian metric $h$ on $L$ such that 
$\Theta(h_{\epsilon})+\epsilon \omega$ is a nonnegative Hermitian form, where 
$\Theta(h_{\epsilon})$ is the Chern curvature form. This definition actually does not depend on 
the choice of $\omega$. A holomorphic vector bundle $V$ on $M$ is called nef if the 
tautological line bundle $\mathcal{O}_{\mathbb{P}(V)}(1)$ on $\mathbb{P}(V)$ is nef. A 
holomorphic vector bundle $V$ is called \emph{numerically flat} if both $V$ and $V^*$ are nef.

We note that Lemma 2.4 of \cite{scholbis}, which was obtained as a corollary of Theorem 1.12
of \cite{DPS}, is stated in the K\"ahler set-up, because \cite{scholbis} is entirely
in the K\"ahler set-up. However, Theorem 1.12
of \cite{DPS} is proved for compact complex manifolds. Hence Lemma 2.4 of \cite{scholbis}
remains valid for compact complex manifolds.

\begin{lemma}[{\cite[Lemma 2.4]{scholbis}}]
If $E$ is finite, then $E$ is numerically flat.
\label{nefness}
\end{lemma}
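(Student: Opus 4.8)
The plan is to derive numerical flatness by feeding the structural consequences of finiteness into the characterization of numerically flat bundles provided by Theorem 1.12 of \cite{DPS}. At the outset I would record that finiteness is preserved under duality: dualizing $E^{\otimes i} \,=\, \sum_j a_{i,j} V_j$ gives $(E^*)^{\otimes i} \,=\, \sum_j a_{i,j} V_j^*$, and $\{V_j^*\}_{j=1}^m$ is again a finite family, so $E^*$ is finite with the same coefficients. Since $E$ is by definition numerically flat precisely when both $E$ and $E^*$ are nef, this observation lets me treat $E$ and $E^*$ symmetrically and reduces the task to producing nefness for an arbitrary finite bundle.

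The two pieces of input I would extract from finiteness are the following. First, Theorem \ref{vanishingofchernclassesthm} already gives $[\mathrm{ch}_k(E)]_{BC} \,=\, 0$ for every $k \geq 1$, and by the duality remark the same holds for $E^*$; this supplies the Chern-class hypothesis of the \cite{DPS} criterion, read in Bott--Chern cohomology as set up in Section \ref{vanishing}. Second, the decomposition $E^{\otimes i} \,=\, \sum_j a_{i,j} V_j$ together with the Krull--Schmidt uniqueness theorem of Atiyah \cite{At} --- already used in the proof of Lemma \ref{lem0} --- rigidly determines the isomorphism type of every tensor power of $E$ in terms of the fixed list $V_1, \ldots, V_m$. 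With these data in hand, Theorem 1.12 of \cite{DPS} applies to both $E$ and $E^*$ and yields nefness of each, hence the numerical flatness of $E$.

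The hard part is this final appeal, where all of the genuine analysis resides: it is inside \cite{DPS} that metrics on $\mathcal{O}_{\mathbb{P}(E)}(1)$ with curvature bounded below by $-\epsilon\omega$ for every $\epsilon > 0$ are actually constructed, so what I must check is only that the hypotheses are met, not reprove the criterion. The one point genuinely specific to the present setting is that I need this criterion outside the K\"ahler world: as noted just before the statement, Theorem 1.12 of \cite{DPS} is in fact proved for arbitrary compact complex manifolds, so no K\"ahler assumption enters and the argument goes through unchanged when $X$ is merely Gauduchon astheno-K\"ahler.
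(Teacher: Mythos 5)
There is a genuine gap. You correctly identify Theorem 1.12 of \cite{DPS} as the tool and correctly observe (via Atiyah's Krull--Schmidt theorem) that every tensor power of $E$ is a direct sum of bundles from the fixed finite list $V_1,\dots,V_m$; you also rightly note that $E^*$ is finite, so it suffices to prove nefness of an arbitrary finite bundle. But you then misstate what Theorem 1.12 actually requires. That theorem has no ``Chern-class hypothesis'': it asserts that $E$ is nef if and only if for every $\epsilon>0$ there is $m_0(\epsilon)$ such that for all $m\ge m_0(\epsilon)$ the symmetric power $\mathrm{Sym}^m(E)$ carries a smooth Hermitian metric whose Chern curvature satisfies $\Theta_m \ge -m\epsilon\,\omega\otimes \mathrm{Id}$ in the sense of Griffiths. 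The vanishing $[\mathrm{ch}_k(E)]_{BC}=0$ from Theorem \ref{vanishingofchernclassesthm} is irrelevant here (it enters only later, in Theorem \ref{DPSthm}), and invoking it does not ``supply the hypothesis'' of the criterion. Vanishing of Chern classes alone certainly does not imply nefness.

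Consequently you never carry out the one step where finiteness actually does analytic work, which is the entire content of the paper's proof: since $\mathrm{Sym}^m(E)$ is a direct summand of $E^{\otimes m}$, Atiyah's theorem gives $\mathrm{Sym}^m(E)\cong \bigoplus_j a'_{m,j}V_j$; fixing Hermitian metrics $g_j$ on the $V_j$ \emph{once and for all} and transporting the orthogonal direct-sum metric through these isomorphisms yields metrics $h_m$ on all $\mathrm{Sym}^m(E)$ whose curvatures are built from the finitely many fixed curvatures $\Theta_j$ and are therefore bounded below by $-C\,\omega\otimes\mathrm{Id}$ with $C$ \emph{independent of} $m$. Since the allowed negativity $-m\epsilon$ in the criterion grows linearly in $m$, the inequality $\Theta_m\ge -m\epsilon\,\omega\otimes\mathrm{Id}$ holds for all $m\ge m_0(\epsilon)$, and Theorem 1.12 of \cite{DPS} then gives nefness. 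Without exhibiting these uniformly curvature-bounded metrics, your appeal to Theorem 1.12 is vacuous: you assert that ``the hypotheses are met'' but the actual hypothesis is precisely this family of metrics, and producing it is the proof. Your remark that Theorem 1.12 of \cite{DPS} is valid on arbitrary compact complex manifolds is correct and matches the discussion preceding the lemma in the paper.
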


\begin{proof}
Since $E$ is finite, there are finitely many indecomposable bundles $V_1,\,\ldots,\, V_k$ such that 
$E^{\otimes i} \,= \,\displaystyle \sum_{j=1}^{k}a_{i,j} V_j \ \forall \ i \geq 1$. The symmetric
product $\text{Sym}^i(E)$ is a direct summand of the vector bundle $E^{\otimes i}$. Hence from
\cite[p.~315, Theorem~3]{At} (quoted in Lemma \ref{lem0}) it follows that
$$
\text{Sym}^i(E)\,=\, \, \bigoplus_{j=1}^{k}a'_{i,j} V_j\, ,
$$
where $a'_{i,j}\,\leq\, a_{i,j}$ are nonnegative integers. For each $i\,\geq\, 1$, fix
an isomorphism of $\text{Sym}^i(E)$ with $\bigoplus_{j=1}^{k}a'_{i,j} V_j$.

Now fix smooth Hermitian structures $g_j$ on $V_j$. This produces a Hermitian structure
on any direct sum of copies of $V_j$ such that the direct sum decomposition is orthogonal.
Consequently, we get a Hermitian structure ${h}_i$ on $\text{Sym}^i(E)$ using the above
isomorphism.

Let $\Theta_j$ be the curvature of the Chern connection on the Hermitian
vector bundle $(V,\, g_j)$. So the curvature $\Theta_i$ of the Chern connection for
$(\text{Sym}^i(E),\, h_i)$ satisfies the equation
$$
\Theta_i\,=\, \bigoplus_{j=1}^{k}a'_{i,j} \Theta_i
$$
(coefficients correspond to direct sum).

According to Theorem 1.12 of \cite{DPS}, the vector bundle $E$ is nef if
for every $\epsilon \,>\,0$ and $m\,\geq \, m_0 (\epsilon)$ we have
\begin{gather}
\Theta_m \,\geq\, -m\epsilon \omega \otimes {\rm Id}_{{\rm Sym}^m(E)}
\label{DPScondition}
\end{gather}
in the sense of Griffiths. It is straight-forward to check that this condition is
satisfied. Hence the finite vector bundle $E$ is nef.
\hfill $\Box$
\end{proof}

We now prove an analogue of Theorem 1.18 of Demailly-Peternell-Schneider \cite{DPS} in our 
special case of astheno-K\"ahler manifolds.

\begin{theorem}
Suppose $(X,\,\omega)$ is a compact complex manifold equipped with a
Hermitian metric $\omega$ that is both, Gauduchon ($\pbp \omega^{n-1}\,=\,0$), as well as
astheno-K\"ahler $(\pbp \omega^{n-2}\,=\,0$). Suppose a holomorphic vector bundle $E$
of rank $r$ over $X$ satisfies $[c_2(E)]_{BC}=0$. 
Then it is numerically flat if and only if the following three conditions hold:
\begin{enumerate}
\item[\rm (1)] $E$ is $\omega$-semi-stable,

\item[\rm (2)] there is a Jordan-H\"older filtration
$$\{0\} \,=\, E_0\,\subset\, E_1\, 
\subset\, \ldots\,\subset\, E_{p-1} \,\subset\, E_p\,=\,E$$
consisting of subbundles, and

\item[\rm (3)] for all $1\, \leq\, i\, \leq\, p$, the $\omega$-stable bundle $E_i/E_{i-1}$
is Hermitian flat, i.e., given by an irreducible unitary representation of $\pi_1(X)$ in
${\rm U}({\rm rank}(E_i/E_{i-1}))$.
\end{enumerate}
\label{DPSthm}
\end{theorem}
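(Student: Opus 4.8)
The plan is to establish both implications, treating the implication ``conditions (1)--(3) $\Rightarrow$ numerically flat'' first and then its converse by induction on the rank of $E$. For the easy direction, I would first observe that a Hermitian flat bundle, being given by a unitary representation of $\pi_1(X)$, carries a metric of vanishing Chern curvature and is therefore numerically flat, as is its dual. I would then invoke Theorem~1.12 of \cite{DPS} to conclude that the class of numerically flat bundles is closed under extensions: if $0\,\longrightarrow\, S\,\longrightarrow\, E\,\longrightarrow\, Q\,\longrightarrow\, 0$ with $S$ and $Q$ numerically flat, then so is $E$. Since the filtration in (2) exhibits $E$ as an iterated extension of the Hermitian flat bundles $E_i/E_{i-1}$ of (3), an induction on the length $p$ then gives numerical flatness of $E$.

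For the converse, suppose $E$ is numerically flat, so that $[c_1(E)]_{BC}\,=\,0$ (and $[c_2(E)]_{BC}\,=\,0$ by hypothesis). First I would derive $\omega$-semistability: since $E$ is nef, every torsion-free quotient $Q$ of $E$ is nef, hence satisfies $\deg_\omega(Q)\,=\,\int_X c_1(Q)\wedge\omega^{n-1}\,\geq\,0$; as $\deg_\omega(E)\,=\,0$ this yields $\mu(Q)\,\geq\,0\,=\,\mu(E)$ for every quotient, which is precisely semistability. The Gauduchon condition $\pbp\omega^{n-1}\,=\,0$ is what makes $\deg_\omega$, and hence the slope $\mu$, well-defined on coherent sheaves. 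I would then run an induction on $\mathrm{rk}(E)$: if $E$ is already $\omega$-stable the induction terminates with a filtration of length one; otherwise $E$ admits a saturated $\omega$-stable subsheaf $S$ of slope $0$, and I would show that $S$ is a subbundle with $E/S$ locally free and that both $S$ and $E/S$ are again numerically flat, so that the inductive hypothesis applies to each and the two filtrations concatenate to the required filtration of $E$.

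The core analytic input is the terminal stable case. When $G$ is $\omega$-stable of slope $0$ with $[c_1(G)]_{BC}\,=\,[c_2(G)]_{BC}\,=\,0$, the Donaldson--Uhlenbeck--Yau correspondence in the Gauduchon setting provides a Hermitian--Einstein metric whose Chern curvature $\Theta$ satisfies $\sqrt{-1}\Lambda_\omega\Theta\,=\,0$, the Einstein factor being $0$ because $\deg_\omega(G)\,=\,0$. I would then establish a Bogomolov--L\"ubke identity expressing $\int_X\big(2\,\mathrm{rk}(G)\,c_2(G)-(\mathrm{rk}(G)-1)c_1(G)^2\big)\wedge\omega^{n-2}$ as a positive multiple of the squared $L^2$-norm of the trace-free part of $\Theta$. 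The astheno-K\"ahler condition $\pbp\omega^{n-2}\,=\,0$ enters twice: it guarantees that a $\pbp$-exact $(2,2)$-form integrates to zero against $\omega^{n-2}$, so that $[c_1(G)]_{BC}\,=\,[c_2(G)]_{BC}\,=\,0$ forces the left-hand side to vanish, and it justifies the integration by parts producing the identity. The vanishing of the left-hand side then forces the trace-free curvature to vanish, which together with $\Lambda_\omega\Theta\,=\,0$ gives $\Theta\,=\,0$; thus $G$ is Hermitian flat, and its stability makes the associated unitary representation of $\pi_1(X)$ irreducible.

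I expect the main obstacle to be twofold. The principal difficulty is establishing and correctly applying the Bogomolov--L\"ubke identity in the non-K\"ahler, astheno-K\"ahler setting, where the usual freedom to integrate Chern-form expressions by parts against powers of $\omega$ is available only through the relations $\pbp\omega^{n-1}\,=\,0$ and $\pbp\omega^{n-2}\,=\,0$. The second is the inductive step: proving that the saturated destabilizing subsheaf $S$ is an honest subbundle with numerically flat sub and quotient, which amounts to ruling out the singularities of $S$ using the nefness of $E$ together with the vanishing of $c_2$, and which is where the hypothesis $[c_2(E)]_{BC}\,=\,0$ is indispensable.
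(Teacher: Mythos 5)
Your overall strategy---induction on the rank, a degree-zero stable subsheaf shown to be a subbundle, a Hermite--Einstein metric from the Li--Yau/L\"ubke--Teleman version of the Donaldson--Uhlenbeck--Yau theorem, and the Kobayashi--L\"ubke (Bogomolov) inequality whose equality case forces flatness---is essentially the paper's. But there is a genuine gap at the central analytic step. You treat the terminal stable piece $G$ under the assumption $[c_1(G)]_{BC}\,=\,[c_2(G)]_{BC}\,=\,0$ and conclude that the discriminant integral $\int_X\bigl(2\,\mathrm{rk}(G)\,c_2(G)-(\mathrm{rk}(G)-1)c_1(G)^2\bigr)\wedge\omega^{n-2}$ vanishes. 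The hypothesis of the theorem, however, only gives $[c_2(E)]_{BC}\,=\,0$ for the total bundle; nothing a priori forces $\int_X c_2(G)\wedge\omega^{n-2}\,=\,0$ for an individual stable subquotient, and the Kobayashi--L\"ubke inequality by itself only yields $\int_X c_2(G)\wedge\omega^{n-2}\,\geq\,0$ once $[c_1(G)]_{BC}\,=\,0$. The paper closes this gap by first building the \emph{entire} filtration $0\,=\,\mathcal F_0\,\subset\,\mathcal F_1\,\subset\,\cdots\,\subset\,\mathcal F_m\,=\,E$ by subbundles with stable quotients satisfying $[c_1(\mathcal F_i/\mathcal F_{i-1})]_{BC}\,=\,0$, writing the inequality $\int_X c_2(\mathcal F_i/\mathcal F_{i-1})\wedge\omega^{n-2}\,\geq\,0$ for every $i$, and then summing these via the Whitney product formula in Bott--Chern cohomology to obtain $0\,=\,\int_X c_2(E)\wedge\omega^{n-2}\,=\,\sum_i\int_X c_2(\mathcal F_i/\mathcal F_{i-1})\wedge\omega^{n-2}$. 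Only this global cancellation of a sum of nonnegative terms forces each individual discriminant to vanish, after which the Hermite--Einstein metric on each stable piece is flat. Without this summation step your claim that the trace-free curvature of $G$ vanishes is unsupported.

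Two smaller points. First, the fact that a saturated degree-zero subsheaf $\mathcal W$ is an honest subbundle does not use $[c_2(E)]_{BC}\,=\,0$, contrary to your closing remark; it follows, as in \cite[Lemma 1.20]{DPS}, from the Hermitian flatness of the line bundle $\det(\mathcal W)$ (a consequence of the equality $\int_X c_1(\mathcal W)\wedge\omega^{n-1}\,=\,0$ together with nefness), which makes the induced morphism $\det(\mathcal W)\,\longrightarrow\,\Lambda^{q}E$ nowhere vanishing. Second, you use $[c_1(E)]_{BC}\,=\,0$ for numerically flat $E$ without justification; the paper derives it by applying Proposition 1.14(ii) and Corollary 1.5 of \cite{DPS} to $\det(E)$, and this vanishing is also needed to propagate the condition $[c_1]_{BC}\,=\,0$ to the quotient $E/\mathcal F$ in the inductive step.
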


\begin{remark}\label{vanishingfirstchernclassremark}
{\rm It can be shown that $[c_1(E)]_{BC}\,=\,[c_1(\det(E))]_{BC}\,=\,0$ whenever $E$ is numerically
flat. Indeed, applying Proposition 1.14 (ii) of \cite{DPS} to the determinant representation of
$E$ and $E^{*}$ 
we see that $\det(E)$ is numerically flat. Now Corollary 1.5 of \cite{DPS} implies that 
$\det(E)$ is Hermitian flat and hence $[c_1(\det(E))]_{BC}\,=\,0$. We are grateful to the 
anonymous referee for pointing this out.}
\end{remark}

\begin{proof}[Proof of Theorem \ref{DPSthm}]
A Hermitian flat bundle is clearly numerically flat. Now from 
\cite[Proposition~1.15(ii)]{DPS} it follows that any holomorphic vector bundle
admitting a filtration of holomorphic subbundles such that each successive quotient
admits a flat Hermitian structure is also numerically flat. Consequently, $E$ is numerically
flat if the three conditions in the theorem hold.

We prove the converse below. Assume that $E$ is numerically flat.

Our proof inducts on the rank of $E$. Note that if $S\,\subset \,E$ is a holomorphic subbundle then the vector bundle $E/S$ is nef because $E$ is
nef \cite[p.~308, Proposition~1.15(i)]{DPS}. If $S$ is nef, then applying
\cite[p.~308, Proposition~1.15(iii)]{DPS} to the exact sequence
$$
0\, \longrightarrow\, (E/S)^* \, \longrightarrow\, E^* \, \longrightarrow\, S^*
\, \longrightarrow\, 0
$$
we conclude that $(E/S)^*$ is nef. Therefore, $E/S$ is numerically flat if $S$ is so.
These observations are crucial for the induction step.

Since $\omega$ is assumed to be Gauduchon, we see that for every holomorphic line 
bundle $L$, $$\displaystyle \int c_1(L) \wedge \omega^{n-1}\,\in\,\mathbb R$$ is independent 
of the choice of Chern connection on $L$. Moreover, because $\omega$ is 
astheno-K\"ahler, for any $(2,2)$ Chern-Weil form $\Phi$, the integral
$$\int \Phi \wedge \omega^{n-2}\,\in\,\mathbb R$$
is independent of the choice of 
Chern connection, i.e., independent of the Bott-Chern representative.

Let $\mathcal{W}$ be a coherent analytic subsheaf of $\mathcal{O}(E)$ such that
the quotient $\mathcal{O}(E)/\mathcal{W}$ is torsion-free. This implies
that $\mathcal{W}$ is reflexive \cite[p.~153, Proposition 4.13]{Ko}.
The rank of $\mathcal{W}$ will be denoted by $q_w$. Then the arguments of 
Step $1$ in the proof of Theorem 1.18 in \cite{DPS} go through to show that 
$\int_X [c_1(\mathcal{W})]_{BC}\wedge \omega^{n-1}\,\leq\, 0$. Now assume that
$$\int_X [c_1(\mathcal{W})]_{BC}\wedge \omega^{n-1}\,=\, 0\, .$$ Then we have
the following:
\begin{itemize}
\item $\det(\mathcal{W})$, which is a holomorphic line bundle, is Hermitian flat,

\item $[c_1(\mathcal{W})]_{BC}\,=\,0$, and

\item the homomorphism $\det(\mathcal{W})\,\longrightarrow\, \Lambda^{q_w} E$ has no nontrivial
zeroes (that is, $\det(\mathcal{W})$ is a line subbundle of $\Lambda^{q_w} E$).
\end{itemize}
Since $\det(\mathcal{W})$
is a line subbundle of $\Lambda^{q_w} E$, it follows that $\mathcal{W}$ is a subbundle
of $E$ (recall that $\mathcal{O}(E)/\mathcal{W}$ is torsion-free); see
\cite[Lemma 1.20]{DPS}.

Now suppose $\mathcal{F} \,\subset\, \mathcal{O}(E)$ is a minimal rank $q\,>\,0$ subsheaf such that $\mathcal{O}(E)/\mathcal{F}$ is torsion-free and
\begin{equation}\label{s1}
\displaystyle \int_X [c_1(\mathcal{F})]_{BC}\wedge \omega^{n-1}\,=\,0\, .
\end{equation}
As noted before, $\mathcal{F}$ is a holomorphic subbundle of $E$ satisfying 
\begin{equation}\label{s010}
[c_1(\mathcal{F})]_{BC}=0.
\end{equation} 
We also note that
$\mathcal{F}$ is stable, because it is of minimal rank satisfying \eqref{s1}. Therefore, the Kobayashi-L\"ubke inequality says that
\begin{equation}\label{s2}
\displaystyle \int_X \left ( 2q\cdot [c_2(\mathcal{F})]_{BC}-(q-1)[c_1(\mathcal{F})^2]_{BC}\right )\wedge \omega^{n-2} \,\geq \,0\, ;
\end{equation}
see \cite[p.~107]{LYZ}. We will now show that in \eqref{s2} the equality holds.

It was noted earlier that $\mathcal{W}$ is a subbundle of $E$ if it satisfies
the condition in \eqref{s1}. Consequently, we get a filtration of holomorphic subbundles
$$
0\,=\, \mathcal{F}_0 \, \subset\, \mathcal{F}\,=:\, \mathcal{F}_1\, \subset \, \mathcal{F}_2\, \subset \, \cdots
\, \subset \, \mathcal{F}_{m-1}\, \subset \, \mathcal{F}_{m}\,=\, E
$$
such that each successive quotient $\mathcal{F}_i/\mathcal{F}_{i-1}$, $1\, \leq\, i\, \leq\, m$, satisfies
the condition
\begin{gather}
[c_1(\mathcal{F}_i/\mathcal{F}_{i-1})]_{BC}\,=\,0\, ,
\label{abovecond}
\end{gather}
and $\mathcal{F}_i/\mathcal{F}_{i-1}$ is stable. Indeed, $E/\mathcal{F}_1$ is a bundle satisfying condition (\ref{abovecond}) (by the cup product formula for the Chern classes in Bott-Chern cohomology, equation (\ref{s010}), and the fact that $[c_1(E)]_{BC}=0$ by remark \ref{vanishingfirstchernclassremark}). The arguments in step 2 of the proof of Theorem 1.18 in \cite{DPS} show that it is numerically flat. Therefore, we may induct on the rank of $E$ to get the above filtration.

\indent Let $q_i$ be the rank of $\mathcal{F}_i/\mathcal{F}_{i-1}$.
We again have the Kobayashi-L\"ubke inequality
\begin{equation}\label{s3}
\displaystyle \int_X \left (2q_i \cdot [c_2(\mathcal{F}_i/\mathcal{F}_{i-1})]_{BC}-(q_i-1)[c_1(\mathcal{F}_i/\mathcal{F}_{i-1})^2]_{BC} \right )\wedge \omega^{n-2}\,\geq \,0
\end{equation}
for every $1\, \leq\, i\, \leq\, m$. Since $[c_1(\mathcal{F}_i/\mathcal{F}_{i-1})]_{BC}=0$, we see that 
$$\displaystyle \int_X [c_2(\mathcal{F}_i/\mathcal{F}_{i-1})]_{BC}\wedge \omega^{n-2}\geq 0.$$
By the cup product formula (and $[c_1(E)^2]_{BC}=0$) we see that $$\displaystyle \int_X [c_2(E)]_{BC} \wedge \omega^{n-2} \geq 0.$$
Recall that $[c_2(E)]_{BC}\,=\,0$ by assumption; therefore, we see that equality holds in all
of the above inequalities. 

Since $\mathcal{F}$ is $\omega$-stable, by Li-Yau and L\"ubke-Teleman's generalization
(\cite{LY}, \cite{KL}) of the 
Uhlenbeck-Yau theorem \cite{UY}, it follows that $\mathcal{F}$ admits a Hermite-Einstein 
metric. Since the inequality in \eqref{s2} is an equality, the 
Hermite-Einstein metric on $\mathcal{F}$ is flat and hence has vanishing Chern classes in the Bott-Chern cohomology. Now we take the quotient $E/\mathcal{F}$ and 
use the induction hypothesis. Note that the remarks made earlier show that the quotient is numerically flat, and the cup product formula shows that it has vanishing $[c_2]_{BC}$.
\hfill $\Box$
\end{proof}

\section{Proof of Theorem \ref{mainthm}}

Any representation $W$ of a finite group $G$ has the property that there are two distinct 
polynomials $f$ and $g$ with nonnegative integral coefficients such that the two 
$G$-modules $f(W)$ and $g(W)$ are isomorphic. Indeed, this follows from the
following two facts
\begin{itemize}
\item any finite dimension $G$-module is a direct sum of irreducible
$G$-modules, and

\item there are only finitely many isomorphism classes of irreducible
$G$-modules.
\end{itemize}
Therefore, if a holomorphic vector bundle $E$ on $X$ is given by a linear
representation of $\pi_1(X)$ with finite image, then $E$ is finite.

To prove the converse, let $E$ be a finite vector bundle over a compact
complex manifold $X$ admitting a Gauduchon
astheno-K\"ahler metric. Therefore, there exists integers $m\geq 1$, $a_{ij} \geq 0$,
$r_j \geq 1$ with
$1\leq\, j\, \leq\, m$, $i\,\in\, {\mathbb N}^{>0}$,
and holomorphic vector bundles $V_j$ of ranks $r_j$ such that
\begin{equation}\label{de1}
E^{\otimes i}\,=\,\displaystyle \sum_{j=1}^{m} a_{ij} V_j\, .
\end{equation}
We may assume that the collection
of vector bundles $\{V_j\}_{j=1}^m$ satisfies the three conditions stated prior to Lemma \ref{lem0}. 

{}From Theorem \ref{vanishingofchernclassesthm}, Lemma \ref{nefness}, and Theorem \ref{DPSthm} we know that $E$ has a filtration
of holomorphic subbundles
\begin{equation}\label{l2}
\{0\} \,=\, E_0\,\subset\, E_1\,
\subset\, \ldots\,\subset\, E_{p-1} \,\subset\, E_p\,=\,E
\end{equation}
such that for all $1\,\leq\, i\, \leq\, p$, the quotient bundle $E_i/E_{i-1}$ is given by an 
irreducible unitary representation of $\pi_1(X)$.

\begin{lemma}\label{finiteHN}
For every $1\,\leq\, i\, \leq\, p$, the vector bundle $E_i/E_{i-1}$ in \eqref{l2} is finite.
\end{lemma}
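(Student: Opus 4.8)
The plan is to show that each Jordan--H\"older factor $S\,:=\,E_i/E_{i-1}$ is finite by producing a \emph{fixed finite} collection of bundles into which all tensor powers $S^{\otimes n}$ decompose, and then invoking the criterion of Nori (the Lemma~3.1 of \cite{No1}, \cite{No2} quoted in the introduction) that such a decomposition is equivalent to finiteness. Thus the whole argument rests on one finiteness statement: \emph{the set of irreducible unitary flat bundles occurring as Jordan--H\"older factors of the tensor powers $E^{\otimes n}$, as $n$ ranges over all positive integers, is finite.}

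To establish this, I would start from the decomposition $E^{\otimes i}\,=\,\sum_{j=1}^m a_{ij}V_j$ of \eqref{de1}. By Lemma~\ref{lem0} each $V_j$ is itself finite; hence by Theorem~\ref{vanishingofchernclassesthm} it has $[c_2(V_j)]_{BC}\,=\,0$ (together with $[c_1(V_j)]_{BC}\,=\,0$), by Lemma~\ref{nefness} it is numerically flat, so Theorem~\ref{DPSthm} equips each $V_j$ with a Jordan--H\"older filtration whose successive quotients are irreducible unitary flat bundles. Since there are finitely many $V_j$ and each has finitely many such quotients, their union is a finite set $\{S_1,\ldots,S_N\}$ of irreducible unitary flat bundles. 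Because $E^{\otimes n}\,=\,\sum_j a_{nj}V_j$ and Jordan--H\"older factors are additive over direct sums, every Jordan--H\"older factor of every $E^{\otimes n}$ lies in $\{S_1,\ldots,S_N\}$, which is the desired finiteness.

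Next I would relate $S^{\otimes n}$ to the tensor powers of $E$. The factor $S$ is a direct summand of the associated graded $\mathrm{gr}(E)\,=\,\bigoplus_i E_i/E_{i-1}$, so $S^{\otimes n}$ is a direct summand of $(\mathrm{gr}\,E)^{\otimes n}$. Tensoring the filtration \eqref{l2} with itself $n$ times gives a filtration of $E^{\otimes n}$ with associated graded $(\mathrm{gr}\,E)^{\otimes n}$; since every factor has vanishing first Chern class, hence slope zero, both $E^{\otimes n}$ and $(\mathrm{gr}\,E)^{\otimes n}$ are semistable of slope zero with the same multiset of Jordan--H\"older factors. Consequently the Jordan--H\"older factors of $S^{\otimes n}$ lie among those of $E^{\otimes n}$, and so in $\{S_1,\ldots,S_N\}$. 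Finally, $S$ being unitary flat, each $S^{\otimes n}$ is unitary flat and therefore completely reducible into a direct sum of irreducible unitary flat bundles, which are precisely its Jordan--H\"older factors. Hence $S^{\otimes n}\,=\,\bigoplus_k b_{nk}S_k$ with all summands drawn from the fixed finite set, and Nori's criterion yields that $S$ is finite.

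The main obstacle I anticipate is justifying that the Jordan--H\"older factors of $S^{\otimes n}$ are genuinely among those of $E^{\otimes n}$. This needs two facts: that forming the associated graded commutes with tensor product at the level of Jordan--H\"older multisets (valid here because every bundle in sight is semistable of slope zero, so the tensored filtration is a filtration by slope-zero semistable pieces), and that complete reducibility of unitary flat bundles identifies their isotypic decomposition with the Jordan--H\"older decomposition. Once these are in place, the conclusion follows immediately from the Atiyah--Nori formalism already used in Lemma~\ref{lem0}.
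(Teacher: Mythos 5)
Your argument is correct and rests on the same essential ingredients as the paper's proof --- the finite collection of irreducible unitary flat Jordan--H\"older factors of the $V_j$, Atiyah's Krull--Schmidt theorem \cite[Theorem 3]{At}, the uniqueness of the Jordan--H\"older graded object from \cite{BTT}, and complete reducibility of unitary flat bundles --- but it is organized along a genuinely different route. The paper first proves that each member of the collection \eqref{c1} is finite by comparing two filtrations of $V_j^{\otimes\alpha}$ (the one \eqref{de4} induced by tensoring \eqref{de2}, whose graded pieces are polystable flat bundles, and the one \eqref{de31} coming from $V_j^{\otimes\alpha}=\sum_\gamma e_{\gamma,j}V_\gamma$), and then separately identifies each $E_i/E_{i-1}$ with a member of \eqref{c1} by comparing the filtrations \eqref{l2} and \eqref{de3o} of $E$ itself. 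You bypass the identification step entirely: you bound the Jordan--H\"older factors of $S^{\otimes n}$, $S=E_i/E_{i-1}$, by realizing $S^{\otimes n}$ as a direct summand of $(\mathrm{gr}\,E)^{\otimes n}$, whose Jordan--H\"older multiset agrees with that of $E^{\otimes n}=\sum_j a_{nj}V_j$, and then use polystability of the unitary flat bundle $S^{\otimes n}$ to upgrade the Jordan--H\"older containment to an actual direct sum decomposition $S^{\otimes n}=\bigoplus_k b_{nk}S_k$, to which Nori's criterion applies. This is somewhat more economical --- it proves exactly what the later lemmas need, namely finiteness of $E_i/E_{i-1}$, without showing that $E_i/E_{i-1}$ occurs among the $V_{j,i}/V_{j,i-1}$ --- but it places the full weight of the argument on the well-definedness and additivity of the Jordan--H\"older multiset under direct sums and under filtrations by polystable slope-zero subquotients; those are precisely the facts the paper extracts from \cite[Proposition 2.1 and Proposition A.2]{BTT} together with \cite[Theorem 3]{At}, and you should cite them explicitly where you invoke ``the same multiset of Jordan--H\"older factors.''
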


\begin{proof}
From Lemma \ref{lem0}, Theorem \ref{vanishingofchernclassesthm}, Theorem \ref{DPSthm}, and
Lemma \ref{nefness} we see that each $V_j$ admits a filtration of holomorphic subbundles such that 
each successive quotient is given by an irreducible unitary representation of $\pi_1(X)$. Fix 
such a Jordan-H\"older filtration
\begin{equation}\label{de2}
\{0\} \,=\, V_{j,0}\,\subset\, V_{j,1}\,
\subset\, \ldots\,\subset\, V_{j, p_j-1} \,\subset\, V_{j,p_j}\,=\,V_j
\end{equation}
for each $1\, \leq\, j\, \leq\, m$, and consider the collection of vector bundles
\begin{equation}\label{c1}
\{V_{j,i}/V_{j,i-1}\}\, , \ \ 1\, \leq\, i\, \leq\, p_j\, ,\ 1\, \leq\, i\, \leq\, m\, .
\end{equation}
Every vector bundle in the collection in \eqref{c1} is given by an irreducible unitary
representation of $\pi_1(X)$.

We will show that every vector bundle in the collection in \eqref{c1} is finite. More
precisely, we will show that for any $V_{j,i}/V_{j,i-1}$ in \eqref{c1}, and any
integer $\alpha \,\geq\, 1$, the tensor power $(V_{j,i}/V_{j,i-1})^\alpha$ is a direct sum
of copies of vector bundles from the collection in \eqref{c1}.

To prove this, note that the filtration of $V_j$ in \eqref{de2} produces a holomorphic filtration of
subbundles
\begin{equation}\label{de4}
\{0\} \,=\, U^0_j\,\subset\, U^1_j\, \subset\, U^2_j\,\cdots\,\subset\,
V^\alpha_j
\end{equation}
of $V^\alpha_j$. The filtration $V^\alpha_j$ in \eqref{de4} has the following properties:
\begin{enumerate}
\item[\rm (1)] each successive quotient is of the form
$$
\bigotimes_{\beta=1}^{p_j} (V_{j,\beta}/V_{j,\beta-1})^{\otimes n_\beta}\, ,
$$
where $n_\beta\, \geq\, 0$ and $\sum_{\beta=1}^{p_j} n_\beta\,=\, \alpha$, and

\item[\rm (2)] $(V_{j,i}/V_{j,i-1})^\alpha$ is one of the successive quotients.
\end{enumerate}
Therefore, each successive quotient in \eqref{de4} is a direct sum of stable vector
bundles of degree zero, because it admits a flat unitary connection given by the flat
unitary connections on $V_{j,\beta}/V_{j,\beta-1}$, $1\, \leq\, \beta\, \leq\, p_j$.
It should be clarified that each successive quotient in \eqref{de4}
is polystable, but need not be stable, because the flat unitary connection on
$V_{j,\beta}/V_{j,\beta-1}$ need not be irreducible.

On the other hand, from the proof of Lemma \ref{lem0} we know that 
$$
V^\alpha_j\,=\, \sum_{\gamma=1}^{m} e_{\gamma,j} V_\gamma\, .
$$
Fix an isomorphism of $V^\alpha_j$ with the direct sum
$\sum_{\gamma=1}^{m} e_{\gamma,j} V_\gamma$. Then fix
an ordering of these direct summands of $V^\alpha_j$.
Now using the filtrations of $V_j$, $1\,\leq\, j\, \leq\, m$,
in \eqref{de2}, and the above ordering of the direct summands of $V^\alpha_j$, we get a filtration
\begin{equation}\label{de31}
\{0\} \,=\, W^0_j\,\subset\, W^1_j\,
\subset\, \cdots\,\subset\, W^{\ell_1-1}_j \,\subset\, W^{\ell_1}_j \,=\, V^\alpha_j
\end{equation}
such that each successive quotient $W^b_j/W^{b-1}_j$, $1\, \leq\, b\, \leq\, \ell_1$, is a
vector bundle from the collection in \eqref{c1}.

We recall that for a Jordan-H\"older filtration of a semi-stable holomorphic vector bundle 
$F$ with successive quotients $Q_i$ reflexive, the isomorphism class of the coherent sheaf 
$\bigoplus_i Q_i$ is unique (see
\cite[Proposition 2.1 (p.~998) and Proposition A.2 (p.~1019)]{BTT}). Consider the two
filtrations of subbundles of $V^\alpha_j$ in \eqref{de4} and \eqref{de31}. For the filtration
in \eqref{de4}, each successive quotient is a direct sum of stable vector bundles of
degree zero, while for the filtration in \eqref{de31}, each successive quotient is a
stable vector bundle of degree zero. Therefore, from Theorem 3 of \cite{At} and the above result of \cite{BTT} we conclude that
each successive quotient for the filtration
in \eqref{de4} is a direct sum of some successive quotients for the
filtration in \eqref{de31}. We noted
above that all the successive quotients $W^b_j/W^{b-1}_j$, $1\, \leq\, b\, \leq\, \ell_1$, in
\eqref{de31} are vector bundles from the collection in \eqref{c1}. Therefore, we now
conclude that each successive quotient for the filtration in \eqref{de4} is a direct sum
of copies of vector bundles from the collection in \eqref{c1}. In particular, the vector
bundle $(V_{j,i}/V_{j,i-1})^\alpha$ is a direct sum
of copies of vector bundles from the collection in \eqref{c1}. Therefore, the vector
bundle $(V_{j,i}/V_{j,i-1})^\alpha$ is finite.

Next, from \eqref{de1} we have
$$
E\,=\,\displaystyle \sum_{j=1}^{m} a_{1j} V_j\, .
$$
Fix an isomorphism of $E$ with $\displaystyle \sum_{j=1}^{m} a_{1j} V_j$. Then fix
an ordering of these direct summands of $E$.
Now using the filtrations of $V_j$, $1\,\leq\, j\, \leq\, m$,
in \eqref{de2}, and the above ordering of the direct summands of $E$, we get a filtration
\begin{equation}\label{de3o}
\{0\} \,=\, W^0\,\subset\, W^1\,
\subset\, \cdots\,\subset\, W^{\ell_0-1} \,\subset\, W^{\ell_0} \,=\, E
\end{equation}
such that each successive quotient $W^b/W^{b-1}$, $1\, \leq\, b\, \leq\, \ell_0$, is a
vector bundle from the collection in \eqref{c1}.

Consider the two filtrations of subbundles of $E$ in \eqref{l2} and \eqref{de3o}. For both of 
them, the successive quotients are stable of degree zero.
Therefore, from the above mentioned result of \cite{BTT}, namely
\cite[Proposition 2.1 (p.~998) and Proposition A.2 (p.~1019)]{BTT}, we conclude that
the graded sheaves for filtrations in \eqref{l2} and \eqref{de3o} are isomorphic. We noted
above that all the successive quotients $W^b/W^{b-1}$, $1\, \leq\, b\, \leq\, \ell_0$, in
\eqref{de3o} are vector bundles from the collection in \eqref{c1}. Therefore, using Theorem 3 of \cite{At} we now
conclude that each successive quotient $E_i/E_{i-1}$, $1\,\leq\, i\, \leq\, p$,
in \eqref{l2} is a vector bundle from the collection in \eqref{c1}.

We have already proved that every vector bundle in \eqref{c1} is finite. Hence
each successive quotient $E_i/E_{i-1}$, $1\,\leq\, i\, \leq\, p$,
in \eqref{l2} is a finite vector bundle.
\hfill $\Box$
\end{proof}

The following lemma show that $E_i/E_{i-1}$ can be trivialized by pulling back to some
finite unramified covering of $X$.

\begin{lemma}
If $X$ is a compact complex manifold admitting a Gauduchon astheno-K\"ahler
metric, and $E$ is a finite holomorphic
vector bundle on $X$, then there is a finite unramified connected Galois cover
$\pi\,:\,Y \,\longrightarrow\, X$ such that the pull back 
$\pi^* E$ admits a filtration of holomorphic subbundles with the
property that each successive quotient is a holomorphically trivial vector bundle.
\label{finiteflat}
\end{lemma}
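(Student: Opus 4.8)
The plan is to realize each successive quotient $E_i/E_{i-1}$ of the filtration \eqref{l2} as a flat unitary bundle with \emph{finite} monodromy, and then to trivialize all of them simultaneously by passing to a single finite Galois cover. By Lemma \ref{finiteHN} each $E_i/E_{i-1}$ is finite, and by construction (Theorem \ref{DPSthm}) it is given by an irreducible unitary representation $\rho_i\,:\,\pi_1(X)\,\longrightarrow\,\mathrm{U}(r_i)$, its holomorphic structure being the one induced by the associated flat unitary connection. First I would record that, since $E_i/E_{i-1}$ is unitary flat, every $(E_i/E_{i-1})^{\otimes n}$ is polystable of degree zero and its indecomposable summands are stable, hence irreducible unitary flat bundles. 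Using Atiyah's theorem \cite{At} together with the uniqueness of the polystable decomposition from \cite{BTT}, finiteness of $E_i/E_{i-1}$ is then exactly the statement that the tensor powers $\rho_i^{\otimes n}$, $n\,\geq\,1$, involve only finitely many isomorphism classes of irreducible unitary representations of $\pi_1(X)$.

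The central step is the group-theoretic claim that such a $\rho_i$ has finite image. Let $G_i\,\subset\,\mathrm{U}(r_i)$ be the closure of $\rho_i(\pi_1(X))$, a compact Lie group by Cartan's theorem, and let $V\,=\,\mathbb{C}^{r_i}$ be the standard module. Since $\rho_i(\pi_1(X))$ is dense in $G_i$, a subspace of $V^{\otimes n}$ is $\rho_i(\pi_1(X))$-invariant if and only if it is $G_i$-invariant, so the irreducible constituents of $\rho_i^{\otimes n}$ coincide with those of $V^{\otimes n}$ as a $G_i$-module, and there are finitely many of them across all $n$. If $G_i$ were infinite it would be positive-dimensional, hence contain a nontrivial circle subgroup $S$; as $S$ acts faithfully on $V$, that module has a nonzero $S$-weight $\mu$, and therefore $V^{\otimes n}$ contains the $S$-weight $n\mu$. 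These weights are pairwise distinct, so the finitely many $G_i$-irreducibles occurring in all the $V^{\otimes n}$ could not carry all the $S$-weights $\{n\mu\}_{n\geq 1}$, a contradiction. Hence $G_i$ is zero-dimensional, thus finite; being dense in a finite discrete group, $\rho_i(\pi_1(X))$ equals $G_i$ and is finite.

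With the finiteness of each image in hand, I would set $K\,:=\,\bigcap_{i=1}^{p}\ker\rho_i$, a normal subgroup of $\pi_1(X)$ of finite index, being a finite intersection of finite-index normal subgroups. Let $\pi\,:\,Y\,\longrightarrow\,X$ be the connected finite unramified Galois cover with $\pi_1(Y)\,=\,K$ and deck group $\pi_1(X)/K$; as a finite \'etale cover of the compact complex manifold $X$ it is again a compact complex manifold (and pulls back the Gauduchon astheno-K\"ahler metric, although this is not needed here). Pulling back the filtration \eqref{l2} yields a filtration of holomorphic subbundles
$$\{0\}\,=\,\pi^* E_0\,\subset\,\pi^* E_1\,\subset\,\cdots\,\subset\,\pi^* E_p\,=\,\pi^* E,$$
with successive quotients $\pi^*(E_i/E_{i-1})$. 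Since the composite of the inclusion $K\,=\,\pi_1(Y)\,\hookrightarrow\,\pi_1(X)$ with $\rho_i$ is trivial, each $\pi^*(E_i/E_{i-1})$ is the flat bundle of the trivial representation, hence holomorphically trivial, which is exactly the assertion.

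The step I expect to be the main obstacle is the group-theoretic claim, namely converting ``finitely many irreducibles among all tensor powers'' into finiteness of the monodromy group. Care is required both to pass correctly between $\rho_i(\pi_1(X))$-decompositions and $G_i$-decompositions via density, and to make the weight-growth argument precise enough to exhibit the contradiction when $G_i$ is positive-dimensional. Everything else---exactness of pullback, intersecting the kernels, and identifying a trivial-monodromy flat bundle with the trivial holomorphic bundle---is routine.
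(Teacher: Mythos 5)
Your proposal is correct and follows the same overall strategy as the paper: show that each successive quotient $E_i/E_{i-1}$ of the filtration \eqref{l2} is an irreducible unitary flat bundle (Theorem \ref{DPSthm}) which is moreover finite (Lemma \ref{finiteHN}), deduce that its monodromy is finite, and then trivialize all the quotients on one common finite Galois cover (your intersection of kernels $K=\bigcap_i\ker\rho_i$ is the same cover as the paper's connected component of the fiber product of the $p$ coverings). The one genuine difference is localized in the key step: the paper disposes of the finite-monodromy claim by citing Proposition 2.8 of \cite{scholbis}, whereas you prove it from scratch --- passing to the compact closure $G_i$ of the image, matching $\rho_i(\pi_1(X))$-decompositions with $G_i$-decompositions by density, and ruling out $\dim G_i>0$ by the unbounded weight $n\mu$ of a circle subgroup acting on $V^{\otimes n}$. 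That argument is correct and makes the proof self-contained; the only point to be careful about, which you implicitly use, is that finitely many indecomposable holomorphic summands of the tensor powers really does yield finitely many isomorphism classes of irreducible unitary representations, which requires the uniqueness side of the Kobayashi--Hitchin correspondence (holomorphically isomorphic irreducible unitary flat bundles on a compact Gauduchon manifold have conjugate monodromy), available here by \cite{KL} together with \cite{At} and \cite{BTT}.
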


\begin{proof}
Lemma \ref{finiteHN} shows that each stable vector bundle $E_i/E_{i-1}$ in \eqref{l2} is 
finite. Theorem \ref{DPSthm} shows that the $E_i/E_{i-1}$ has an irreducible unitary flat 
connection. We claim that the Hermite-Einstein flat connection on $E_i/E_{i-1}$ actually has 
finite monodromy. Indeed, this follows from Proposition 2.8 of \cite{scholbis}. The monodromy 
representation of the Hermite-Einstein flat connection on $E_i/E_{i-1}$ produces a finite 
unramified Galois covering of $X$.

Now take $Y$ to be a connected component of the fiber product of the $p$ unramified Galois
coverings of $X$ given by $\{E_i/E_{i-1}\}_{i=1}^p$. The pullback to $Y$ of the filtration
in \eqref{l2} has the required property.
\hfill $\Box$
\end{proof}

The following lemma shows that the holomorphic vector bundle $\pi^* E$ in Lemma 
\ref{finiteflat} is trivial.

\begin{lemma}
If $E$ is a finite holomorphic vector bundle of rank $r$ on a compact complex manifold
$Y$ such that there is a filtration of holomorphic subbundles
\begin{equation}\label{a0}
0\,=\, E_0\, \subset\, E_1\, \subset\, \cdots \, \subset\, E_{r-1} \, \subset\, E_r\,=\, E
\end{equation}
satisfying the conditions that ${\rm rank}(E_i)\,=\, i$, and
the holomorphic line bundle $E_i/E_{i-1}$ is trivial
for all $1\, \leq\, i\, \leq\, r$. Then $E$ is holomorphically trivial.
\label{extensionsbyflats}
\end{lemma}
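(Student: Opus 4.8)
The plan is to derive a contradiction from finiteness if $E$ is not holomorphically trivial, by showing that the symmetric powers of a suitable rank-two subquotient of $E$ have unboundedly growing radical length, whereas finiteness forces every indecomposable summand of the powers $E^{\otimes n}$ — and hence every subquotient of $E^{\otimes n}$ — to have radical length bounded by a fixed constant. I work throughout in the abelian category $\mathcal{C}$ of coherent sheaves on $Y$ admitting a finite filtration all of whose successive quotients are isomorphic to $\mathcal{O}_Y$; this is a Serre subcategory of the semistable sheaves of slope $0$, it is closed under $\otimes$, subobjects, and quotients, and its only simple object is $\mathcal{O}_Y$. The given filtration places $E$ in $\mathcal{C}$, and the $n$-fold tensor product of the given filtration places every $E^{\otimes n}$ in $\mathcal{C}$ as well. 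For $V$ in $\mathcal{C}$, let $\mathrm{rad}(V)$ be the kernel of the canonical map onto the maximal trivial quotient $\mathcal{O}_Y^{\oplus h^0(V^*)}$, and let $\ell(V)$ be the length of the radical series $V \supseteq \mathrm{rad}(V) \supseteq \mathrm{rad}^2(V) \supseteq \cdots$. As usual $\ell$ is monotone under subobjects and quotients, satisfies $\ell(A\oplus B)=\max(\ell(A),\ell(B))$, and obeys $\ell(V)\leq \mathrm{rank}(V)$.

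First I would extract a nonsplit rank-two subquotient. Assuming $E$ is nontrivial, let $i\geq 2$ be minimal with $E_i\not\cong\mathcal{O}_Y^{\oplus i}$, so $E_{i-1}\cong\mathcal{O}_Y^{\oplus(i-1)}$ and the extension $0\longrightarrow\mathcal{O}_Y^{\oplus(i-1)}\longrightarrow E_i\longrightarrow\mathcal{O}_Y\longrightarrow 0$ is nonsplit. Its class in $\mathrm{Ext}^1(\mathcal{O}_Y,\mathcal{O}_Y^{\oplus(i-1)})=H^1(Y,\mathcal{O}_Y)^{\oplus(i-1)}$ has a nonzero component $\xi$; pushing out along the corresponding projection $\mathcal{O}_Y^{\oplus(i-1)}\to\mathcal{O}_Y$ gives a nonsplit extension $0\to\mathcal{O}_Y\to F\to\mathcal{O}_Y\to 0$ of class $\xi\neq 0$. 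Since $F$ is a quotient of the subbundle $E_i\subseteq E$, it is a subquotient of $E$, and therefore each $\mathrm{Sym}^n F$ — a direct summand of $F^{\otimes n}$ — is a subquotient of $E^{\otimes n}$.

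The crux, and the step I expect to be the main obstacle, is to show $\ell(\mathrm{Sym}^n F)=n+1$, i.e.\ that the symmetric powers of $F$ are genuine ``Jordan blocks''. I would first establish the cohomological input: for a nonsplit self-extension $G$ of $\mathcal{O}_Y$ with Dolbeault class $\xi\neq 0$, choosing a local frame $e_1$ (spanning the sub-line-bundle, $\delb e_1=0$) and $e_2$ (lifting the quotient, $\delb e_2=\xi\, e_1$), a global section $\sum_k f_k e_1^{m-k}e_2^k$ of $\mathrm{Sym}^m G$ is $\delb$-closed exactly when $\delb f_k+(k+1)\xi f_{k+1}=0$ for all $k$; solving from the top forces every $f_k$ with $k\geq 1$ to vanish (at each stage the obstruction is that a nonzero constant multiple of $\xi$ is not $\delb$-exact), so $h^0(\mathrm{Sym}^m G)=1$. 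Applying this to $G=F^*$ (again a nonsplit self-extension) shows the maximal trivial quotient of $\mathrm{Sym}^m F$ is a single copy of $\mathcal{O}_Y$, whose kernel is precisely the image of multiplication by the nowhere-vanishing section $e_1$; hence $\mathrm{rad}(\mathrm{Sym}^m F)=e_1\cdot\mathrm{Sym}^{m-1}F\cong\mathrm{Sym}^{m-1}F$. Iterating yields $\mathrm{rad}^k(\mathrm{Sym}^n F)\cong\mathrm{Sym}^{n-k}F\neq 0$ for $0\leq k\leq n$, so $\ell(\mathrm{Sym}^n F)=n+1$.

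Finally I would combine this with finiteness. By definition there are finitely many indecomposable bundles $V_1,\ldots,V_m$ with $E^{\otimes n}=\sum_j a_{n,j}V_j$ for all $n$; put $R=\max_j\mathrm{rank}(V_j)$. By Atiyah's theorem every indecomposable summand of $E^{\otimes n}$ is one of the $V_j$, so $\ell(E^{\otimes n})\leq\max_j\ell(V_j)\leq R$ for all $n$. Since $\mathrm{Sym}^n F$ is a subquotient of $E^{\otimes n}$, monotonicity gives $n+1=\ell(\mathrm{Sym}^n F)\leq\ell(E^{\otimes n})\leq R$, absurd for $n\geq R$. Hence $E$ is holomorphically trivial. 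I note that this argument uses only the definition of finiteness and Atiyah's Krull--Schmidt theorem (both already invoked above) together with an elementary Dolbeault computation, and does not require the Hermite--Einstein or numerical-flatness machinery of the previous sections.
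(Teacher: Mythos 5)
Your argument is correct, and up to the point where the contradiction is extracted it runs parallel to the paper's: the reduction to a nonsplit rank-two subquotient $F$ (the paper reorders the filtration of $E_s$ so that the top two steps carry the nonzero component of the extension class, which is exactly your pushout), and the computation $h^0(Y,\mathrm{Sym}^m G)=1$ for a nonsplit self-extension $G$ of $\mathcal{O}_Y$ (the paper obtains this from the connecting homomorphism $1\mapsto (n_0-1)\tau$ in the long exact sequence, you from an explicit $\overline{\partial}$-frame; same content). Where you genuinely diverge is in how the contradiction with finiteness is reached. The paper counts sections: in rank two, $h^0(\mathrm{Sym}^i E)=1$ together with $h^0(V'_j)\geq 1$ forces $\mathrm{Sym}^i E$ to be a single indecomposable $V'_j$, bounding its rank; in higher rank it argues that $\mathrm{Sym}^i(F/F_{s-2})$ is indecomposable and then that the indecomposable direct summands of $\mathrm{Sym}^i E$ must have unbounded rank. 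You instead introduce the Loewy (radical) length $\ell$ in the category of iterated extensions of $\mathcal{O}_Y$, prove $\ell(\mathrm{Sym}^nF)=n+1$ via $\mathrm{rad}(\mathrm{Sym}^mF)\cong\mathrm{Sym}^{m-1}F$, and use that $\ell$ is monotone under subquotients and equals the maximum over direct summands. This buys a cleaner passage from the subquotient $\mathrm{Sym}^nF$ to the summands $V_j$: an indecomposable subquotient of large rank does not by itself force a large indecomposable direct summand, whereas large Loewy length does, so your invariant makes the final step of the general case airtight where the paper's ``consequently'' is terse. Two small caveats: the blanket claim that your category is ``closed under subobjects'' (and is a Serre subcategory of slope-$0$ semistable sheaves, which presupposes a metric the lemma does not provide) is false for arbitrary coherent subsheaves -- e.g.\ ideal sheaves of points inside $\mathcal{O}_Y$ -- but every subobject you actually use is a subbundle from a filtration or the kernel of a morphism between unipotent bundles, and for those the closure holds; and the supporting facts (the category is abelian with $\mathcal{O}_Y$ as its unique simple object, every nonzero map to $\mathcal{O}_Y$ is surjective because sections of unipotent bundles are nowhere vanishing, so $\mathrm{rad}(V)=\ker\bigl(V\to\mathcal{O}_Y^{\oplus h^0(V^*)}\bigr)$) are asserted rather than proved. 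They are all true and routine, but a complete write-up should include them.
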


\begin{proof}
Since $E$ is finite, there are finitely many indecomposable holomorphic vector bundles
$V_1,\, \cdots,\, V_m$ such that
\begin{equation}\label{y1}
E^{\otimes i} \,=\, \sum_{j=1}^m c_{i,j}V_j
\end{equation}
for all $i\, \geq\, 1$, where $c_{i,j}$ are nonnegative integers. By Lemma \ref{lem0} the $V_i$ can be chosen to be finite bundles themselves. Given that
$E_i/E_{i-1}$ is trivial for all $1\, \leq\, i\, \leq\, r$, each vector bundle
$E^{\otimes i}$ admits a filtration of holomorphic subbundles such that 
each successive quotient of it is the trivial line bundle. Akin to the proof of
Lemma \ref{finiteHN}, by the (weak) uniqueness of the graded object of a Jordan-H\"older
filtration in the sense of Proposition 2.1 of \cite{BTT} and from Theorem 3 of \cite{At} we see that the $V_i$ also admit such filtrations.

When $r\,=\,1$, there is nothing to prove. Next we consider the case of rank two.
Assume that
\begin{enumerate}
\item[\rm (1)] $r\,=\, \text{rank}(E)\, =\,2$, and

\item[\rm (2)] $E$ is not trivial.
\end{enumerate}
Therefore, $E$ fits in a short exact sequence of holomorphic vector bundles
\begin{equation}\label{g3}
0\,\longrightarrow\, E_1\,=\, {\mathcal O}_Y\,\longrightarrow\, E\,
\stackrel{\gamma}{\longrightarrow}\,{\mathcal O}_Y\,=\, E/E_1 \,\longrightarrow\, 0\, ,
\end{equation}
which does not split holomorphically, because $E$ is not trivial.

For any integer $i\, \geq\, 1$, let $\text{Sym}^i(E)$ denote the $i$-th symmetric 
power of $E$; it is a direct summand of $E^{\otimes i}$, meaning $\text{Sym}^i(E) 
\,\subset\, E^{\otimes i}$, and there is a holomorphic subbundle $S_i\,\subset\, 
E^{\otimes i}$ such that the natural homomorphism
$$
\text{Sym}^i(E)\oplus S_i\, \longrightarrow\, E^{\otimes i}
$$
is an isomorphism. From this it follows that there are
finitely many vector bundles $\{V'_j\}_{j=1}^{m'}$ from the collection $V_1,\ldots, V_m$ such that
\begin{equation}\label{y2}
\text{Sym}^i(E) \,=\, \sum_{j=1}^{m'} c'_{i,j}V'_j
\end{equation}
for all $i\, \geq\, 1$, where $c'_{i,j}$ are nonnegative integers. Indeed, since the $V_j$
are indecomposable, this follows from Theorem 3 of \cite{At}. Recall that each $V'_j$ 
admits a filtration of holomorphic subbundles such that each
successive quotient of it is the trivial line bundle. In particular, we have
\begin{equation}\label{y4}
\dim H^0(Y,\, V'_j)\,\geq \, 1\, , \ \ \forall \ 1\leq\, j\,\leq\, m'\, .
\end{equation}

We will now show that
\begin{equation}\label{g2}
\dim H^0(Y,\, \text{Sym}^n(E))\,=\, 1\, , \ \ \forall \ n \,\geq\, 1\, .
\end{equation}

Let $$\tau\, \in\, H^1(Y,\, {\rm Hom}({\mathcal O}_Y,\, {\mathcal O}_Y)) \,=\,
H^1(Y,\, {\mathcal O}_Y)$$
be the extension class for the short exact sequence in \eqref{g3}. We have
$$\tau\,\not=\, 0$$
because \eqref{g3} does not split holomorphically. Let
\begin{equation}\label{g4}
0\,\longrightarrow\, H^0(Y,\, {\mathcal O}_Y) \,\stackrel{\alpha}{\longrightarrow}\,
H^0(Y,\, E)
\,\longrightarrow\, H^0(Y,\, {\mathcal O}_Y) \,\stackrel{\varphi}{\longrightarrow}
\, H^1(Y,\, {\mathcal O}_Y)
\end{equation}
be the long exact sequence of cohomologies for the short exact sequence in \eqref{g3}.
Since the homomorphism $\varphi$ in \eqref{g4} sends $1\,\in\, H^0(Y,\, {\mathcal O}_Y)\,=\,
\mathbb C$ to $\tau\, \in\, H^1(Y,\, {\mathcal O}_Y)$ (this follows from the construction
of the extension class), we conclude that $\alpha$ in \eqref{g4} is
an isomorphism. Therefore, \eqref{g2} holds for $m\,=\,1$.

Now we will prove \eqref{g2} using induction on $m$.

Suppose \eqref{g2} holds for all $n\, \leq\, n_0-1$, with $n_0\, \geq\, 2$. To
prove \eqref{g2} for $n\,=\, n_0$, consider
the short exact sequence of holomorphic vector bundles
$$
0\,\longrightarrow\, \text{Sym}^{n_0-1}(E)\,\longrightarrow\, \text{Sym}^{n_0}(E)
\,\stackrel{\gamma^{\otimes n_0}}{\longrightarrow}\,
\text{Sym}^{n_0}({\mathcal O}_Y)\,=\, {\mathcal O}_Y
\,\longrightarrow\, 0\, ,
$$
where $\gamma$ is the projection in \eqref{g3}.
The above inclusion of $\text{Sym}^{n_0-1}(E)$ in $\text{Sym}^{n_0}(E)$ is given by the
natural inclusion $E^{\otimes (n_0-1)}\otimes E_1 \, \hookrightarrow\,
E^{\otimes (n_0-1)}\otimes E\,=\, E^{\otimes n_0}$ (see \eqref{g3}). Let
\begin{equation}\label{g5}
0\,\longrightarrow\, H^0(Y,\, \text{Sym}^{n_0-1}(E)) \,\stackrel{\beta}{\longrightarrow}
\, H^0(Y,\, \text{Sym}^{n_0}(E)) \,\longrightarrow\, H^0(Y,\, {\mathcal O}_Y) \,
\stackrel{\varphi'}{\longrightarrow} \, H^1(Y,\, \text{Sym}^{n_0-1}(E))
\end{equation}
be the long exact sequence of cohomologies associated to the above short
exact sequence of vector bundles. Now consider the composition
$$
(\gamma^{\otimes (n_0-1)})_*\circ \varphi'\,:\, H^0(Y,\, {\mathcal O}_Y)
\,\longrightarrow\, H^1(Y,\, \text{Sym}^{n_0-1}({\mathcal O}_Y))\,=\,
H^1(Y,\, {\mathcal O}_Y)\, ,
$$
where $\varphi'$ is the homomorphism in \eqref{g5}, and $$\gamma^{\otimes (n_0-1)}\,:\,
\text{Sym}^{n_0-1}(U)\,\longrightarrow\, \text{Sym}^{n_0-1}({\mathcal O}_Y)\,=\,
{\mathcal O}_Y$$
is the homomorphism given by $\gamma$ in \eqref{g3}. This
$(\gamma^{\otimes (n_0-1)})_*\circ \varphi'$ sends $1\,\in\, H^0(Y,\, {\mathcal O}_Y)\,=\,
\mathbb C$ to $(n_0-1)\tau\, \in\, H^1(Y,\, {\mathcal O}_Y)$, which is nonzero because
$\tau\,\not=\, 0$. So, the homomorphism $(\gamma^{\otimes (n_0-1)})_*\circ \varphi'$ is injective,
implying that $\varphi'$ is injective. Consequently, the homomorphism $\beta$ in \eqref{g5}
is an isomorphism. This completes the proof of \eqref{g2}.

{}From \eqref{y2}, \eqref{g2} and \eqref{y4} it follows that
$\sum_{j=1}^{m'} c'_{i,j}\, \leq\, 1$ for all $i\, \geq\, 0$. This implies that
$$
i+1\,=\, \text{rank}(\text{Sym}^i(E))\, \leq\,
\max_{1\leq j\leq m'}\ \text{rank}(V'_j) \, , \ \ \forall \ i \,\geq\, 1\, .
$$
In view of this contradiction we conclude that the lemma holds for $r\,=\,2$.

We will now prove the general case.

Assume that
\begin{enumerate}
\item[\rm (1)] $r\,=\, \text{rank}(E)\, >\,2$, and

\item[\rm (2)] $E$ is not trivial.
\end{enumerate}

We will show that there is a subbundle $F\, \subset\, E$
with $2\, \leq\, \text{rank}(F)\,=\, s\, \leq\, r\,=\, \text{rank}(E)$, and a filtration of
holomorphic subbundles
\begin{equation}\label{g0}
0\,=\, F_0\, \subset\, F_1\, \subset\, \cdots \, \subset\, F_{s-1} \, \subset\, F_s\,=\,
F\, ,
\end{equation}
satisfying the following conditions:
\begin{enumerate}
\item[\rm (1)] $\text{rank}(F_i)\,=\, i$ for all $1\, \leq\, i\, \leq\, s$,

\item[\rm (2)] the holomorphic line bundles $F_i/F_{i-1}$, $1\, \leq\, i\, \leq\, s$, are
trivial, and

\item[\rm (3)] the rank two holomorphic vector bundle $F/F_{s-2}$ is not trivial.
\end{enumerate}
To construct a filtration as in \eqref{g0}, take $s$ to be the smallest positive integer
such that the vector bundle $E_s$ in \eqref{a0} is nontrivial. So $s\, \geq\, 2$,
and the vector bundle $E_{s-1}$ is isomorphic to ${\mathcal O}^{\oplus (s-1)}_Y$.
Moreover, $E_s$ fits in a short exact sequence of holomorphic vector bundles
$$
0\,\longrightarrow\, {\mathcal O}^{\oplus (s-1)}_Y\,\longrightarrow\, E_s
\,\longrightarrow\, {\mathcal O}_Y\,=\, E_s/E_{s-1} \,\longrightarrow\, 0\, .
$$
The corresponding extension class $$\delta\, :=\, (\delta_1,\, \cdots,\,
\delta_{s-1})\, \in\, H^1(Y,\, \text{Hom}({\mathcal O}_Y,\, {\mathcal O}^{\oplus (s-1)}_Y))
\,=\, H^1(Y,\, {\mathcal O}_Y)^{\oplus (s-1)}
$$
is nonzero, where $\delta_i\, \in\, H^1(Y,\, {\mathcal O}_Y)$,
because $E_s$ is nontrivial, implying that the above short exact sequence of vector
bundles does not split holomorphically. Choose an integer $j_0\, \in\, [1,\, s-1]$ such that
$\delta_{j_0}\, \not=\, 0$. Now in \eqref{g0}, take
\begin{itemize}
\item $F_i\,=\, E_i$ for all $0\, \leq\, i\, \leq\, j_0-1$ (see \eqref{a0}),

\item $F_i\,=\, E_{i+1}$ for all $j_0\, \leq\, i\, \leq\, s-2$,

\item $F_{s-1}\, =\, E_{j_0}$, and

\item $F\,=\, F_s\,=\, E_s$.
\end{itemize}
This filtration clearly satisfies all the three conditions.

Note that for all $i\, \geq\, 1$, the symmetric product $\text{Sym}^i(F)$ is a
subbundle of $\text{Sym}^i(E)$, and $\text{Sym}^i(F/F_{s-2})$ is a quotient of
$\text{Sym}^i(F)$. Since $F/F_{s-2}$ is a nontrivial extension of ${\mathcal O}_Y$ by
${\mathcal O}_Y$, we have
\begin{equation}\label{y5}
\dim H^0(Y,\, \text{Sym}^i(F/F_{s-2}))\,=\, 1\, , \ \ \forall \ i \,\geq\, 1\, ,
\end{equation}
as proved in \eqref{g2}; note that the proof of \eqref{g2} only uses that the
extension in \eqref{g3} is nontrivial, and does not use that the rank two
vector bundle is finite.

Since $\text{Sym}^i(F/F_{s-2})$ admit a filtration of holomorphic subbundles such that each 
successive quotient of it is the trivial line bundle, from \eqref{y5} it follows that 
$\text{Sym}^i(F/F_{s-2})$ is indecomposable for all $i\,\geq\, 1$. Consequently, the 
indecomposable components (direct summands) of $\text{Sym}^i(E)$ are of arbitrarily large rank 
as $i$ goes to infinity. Since $\text{Sym}^i(E)$ is a direct summand of $E^{\otimes i}$, the 
indecomposable components of $E^{\otimes i}$ are also of arbitrarily large rank as $i$ goes to 
infinity. But from \eqref{y1} it follows that these ranks are bounded by $\max_{1\leq j\leq m}\ 
\text{rank}(V_j)$.

In view of the above contradiction, the lemma is proved.
\hfill $\Box$
\end{proof}

The vector bundle $\pi^*E$ in Lemma \ref{finiteflat} is finite because $E$ is finite. 
So from Lemma \ref{extensionsbyflats} we know that $\pi^*E$ is holomorphically 
trivial. Let $D$ be a connection on $\pi^*E$ given by a trivialization of it (this 
connection does not depend on the choice of the trivialization). Let 
$\Gamma\,:=\,\text{Gal}(\pi)$ be the Galois group for the Galois covering $\pi$. For 
any $g\, \in\, \Gamma$, the vector bundle $g^*\pi^*E$ is trivial, and any isomorphism 
between $\pi^*E$ and $g^*\pi^*E$ takes the connection $D$ on $\pi^*E$ to the 
connection $g^*D$ on $g^*\pi^*E$. Therefore, $D$ descends to a connection on $E$; 
this descended connection will be denoted by $D_E$. We note that $D_E$ is flat 
because $D$ is so. The monodromy representation for $D_E$ clearly factors through the 
quotient $\pi_1(X)\, \longrightarrow\, \Gamma$. Therefore, $E$ admits a flat 
connection with finite monodromy group. This completes the proof of Theorem 
\ref{mainthm}.

\paragraph*{Acknowledgements.}
The authors are sincerely grateful to the anonymous referee for pointing out subtle errors in our 
earlier proofs and giving other useful feedback. The work of the second author (Pingali) was 
partially supported by SERB grant No. ECR/2016/001356. The second author also thanks the Infosys 
foundation for supporting him through the Infosys young investigator grant. The first author is 
supported by a J. C. Bose Fellowship.

\providecommand{\bysame}{\leavevmode\hbox to3em{\hrulefill}\thinspace}
%
%

\bibliographystyle{amsalpha}
\bibliographymark{References}
\def\cprime{$'$}

\end{document}